\DeclareMathAlphabet{\mathpzc}{OT1}{pzc}{m}{it}
\newtheorem{theorem}{Theorem}[section]
\newtheorem{theorem-definition}[theorem]{Theorem-Definition}
\newtheorem{lemma-definition}[theorem]{Lemma-Definition}
\newtheorem{definition-prop}[theorem]{Proposition-Definition}
\newtheorem{prop}[theorem]{Proposition}
\newtheorem{lemma}[theorem]{Lemma}
\newtheorem{cor}[theorem]{Corollary}
\newtheorem{definition}[theorem]{Definition}
\newtheorem{lem}[theorem]{Lemma}
\newtheorem{question}[theorem]{Question}
\newcommand{\LL}{\ensuremath{\mathbb{L}}}
\newcommand{\N}{\ensuremath{\mathbb{N}}}
\newcommand{\Z}{\ensuremath{\mathbb{Z}}}
\newcommand{\Q}{\ensuremath{\mathbb{Q}}}
\newcommand{\A}{\ensuremath{\mathbb{A}}}
\newcommand{\cX}{\ensuremath{\mathcal{X}}}
\newcommand{\cU}{\ensuremath{\mathscr{U}}}
\renewcommand{\A}{\ensuremath{\mathbb{A}}}
\renewcommand{\cU}{\ensuremath{\mathscr{U}}}
\newcommand{\la}{\ensuremath{\langle }}
\newcommand{\ra}{\ensuremath{\rangle }}
\newcommand{\Spec}{\ensuremath{\mathrm{Spec}\,}}
\newcommand{\Spf}{\ensuremath{\mathrm{Spf}\,}}
\numberwithin{equation}{section} \hyphenpenalty=6000
\begin{document}
\title{A note on motivic integration in mixed characteristic}


\author{Johannes Nicaise}

\address{KULeuven\\
Department of Mathematics\\ Celestijnenlaan 200B\\3001 Heverlee \\
Belgium} \email{johannes.nicaise@wis.kuleuven.be}

\author{Julien Sebag}

\address{Universit\'e Rennes 1\\
UFR math\'ematiques\\IRMAR\\
263 Avenue du G\'en\'eral Leclerc CS 74205\\
35042 Rennes Cedex\\
France}

\email{Julien.Sebag@univ-rennes1.fr} 

\begin{abstract}
We introduce a quotient of the Grothendieck ring of varieties by
identifying classes of universally homeomorphic varieties. We show
that the standard realization morphisms factor through this
quotient, and we argue that it is the correct value ring for the
theory of motivic integration on formal schemes and rigid
varieties in mixed characteristic.

The present note is an excerpt of a detailed survey paper which
will be published in the proceedings of the conference ``Motivic
integration and its interactions with model theory and
non-archimedean geometry'' (ICMS, 2008).
\end{abstract}

\maketitle

\section{Introduction}
The Grothendieck ring $K_0(Var_F)$ of varieties over a field $F$
arises naturally as the universal ring of additive and
multiplicative invariants of such varieties. Taking the class of a
variety in the Grothendieck ring is the most general way to
``measure the size'' of the variety. In recent years, the
Grothendieck ring of varieties has received much attention,
because of its role as value ring in several theories of motivic
integration.

In spite of this renewed interest, many basic questions on the
structure of the Grothendieck ring remain unanswered. The main
difficulty is that it may be very hard to decide whether two given
varieties have distinct classes in the Grothendieck ring. The
central question in this context is the one raised by Larsen and
Lunts in \cite[1.2]{Larsen-Lunts}.

\begin{question}[Larsen-Lunts]\label{que-lalu}
Let $F$ be a field, and let $X$ and $Y$ be  $F$-varieties such
that $[X]=[Y]$ in $K_0(Var_F)$. Is it true that $X$ and $Y$ are
piecewise isomorphic, i.e., that we can find an integer $n>0$,
subvarieties $X_1,\ldots,X_n$ of $X$ and subvarieties
$Y_1,\ldots,Y_n$ of $Y$ such that $X_i$ is $F$-isomorphic to $Y_i$
for every $i$ in $\{1,\ldots,n\}$?\end{question} This question has
been answered affirmatively in certain cases
 \cite{Liu-Sebag}\cite{Sebag-PAMS}, but it remains open in general. In the present note, we raise a different question.

\begin{question}\label{que-uhom}
Let $F$ be a field of characteristic $p>0$.  If $X$ and $Y$ are
$F$-varieties such that there exists a universal homeomorphism of
$F$-schemes $Y\rightarrow X$, is it true that $[X]=[Y]$ in
$K_0(Var_F)$?
\end{question}

It seems reasonable to expect that Question \ref{que-uhom} has a
negative answer, in general. In characteristic zero, universally
homeomorphic varieties are piecewise isomorphic (Proposition
\ref{prop-pis}). In positive characteristic, Question
\ref{que-uhom} is in some sense orthogonal to Question
\ref{que-lalu}, since there are examples of universally
homeomorphic $F$-varieties that are not piecewise isomorphic. The
most basic example is the following: assume that $F$ is imperfect,
and let $F'$ be a non-trivial finite purely inseparable extension
of $F$. Then the morphism $\Spec F'\rightarrow \Spec F$ is a
universal homeomorphism. We do not know if $[\Spec F']=[\Spec F]$
in $K_0(Var_F)$.

For every field $F$, we introduce a quotient $K_0\la Var_F\ra $ of
the Grothendieck ring of $F$-varieties by identifying classes of
universally homeomorphic varieties (Definition \ref{def-mod}). We
call this quotient the modified Grothendieck ring of
$F$-varieties. If $F$ has characteristic zero, then the projection
morphism
$$K_0(Var_F)\rightarrow K_0\la Var_F\ra$$ is an isomorphism
(Proposition \ref{prop-zero}). In any characteristic, there exists
a canonical isomorphism
$$K_0\la Var_F \ra\rightarrow K_0(ACF_F)$$ to the Grothendieck
ring of the theory $ACF_F$ of algebraically closed fields over $F$
(Proposition \ref{prop-acf}). We show that the standard
realization morphisms of the Grothendieck ring of varieties
(\'etale realization, Poincar\'e polynomial,$\ldots$) factor
through the modified Grothendieck ring $K_0\la Var_F\ra $
(Proposition \ref{prop-real}). In fact, this is what makes
Question \ref{que-uhom} hard to answer: we cannot use the standard
realization morphisms to distinguish classes of universally
homeomorphic varieties in the Grothendieck ring.

In Section \ref{sec-mixed}, we fill a gap in the proof of the
change of variables theorem for motivic integrals on formal
schemes in mixed characteristic \cite[8.0.5]{sebag1}. To this aim,
it is necessary to replace the Grothendieck ring of varieties by
the modified version introduced in Definition \ref{def-mod}. We
emphasize that this correction only affects the theory of motivic
integration in {\em mixed} characteristic; in equal characteristic
$p\geq 0$, the results in
 \cite[8.0.5]{sebag1} are valid as stated. The modification is
 harmless for the applications of the theory, because the
standard realization morphisms factor through the modified
Grothendieck ring. In Sections \ref{sec-rig} and \ref{sec-lit}, we
give a list of changes that should be made to the literature.

The present note is an excerpt of a detailed survey paper, which
will be published in the proceedings of the conference ``Motivic
integration and its interactions with model theory and
non-archimedean geometry'' (ICMS, 2008).
\subsection*{Acknowledgements}
The idea of identifying classes of universally homeomorphic
varieties in the Grothendieck ring has been suggested to us by A.
Chambert-Loir. We are very grateful to him for this suggestion,
and for many helpful discussions.

\subsection*{Notations}
We denote by $(\cdot)_{red}$ the functor from the category of
schemes to the category of reduced schemes that maps a scheme $X$
to its maximal reduced closed subscheme $X_{red}$. If $S$ is a
Noetherian scheme, then an $S$-variety is a reduced separated
$S$-scheme of finite type.

\section{The Grothendieck ring of varieties}\label{sec-k0}
Let $S$ be a Noetherian scheme.
 For the definition of the
Grothendieck ring of $S$-varieties $K_0(Var_S)$ and its
localization $\mathcal{M}_S$, we refer to \cite[2.1]{Ni-tracevar}.
We recall some of the main realization morphisms. For details, the
reader may consult \cite[\S\,2.1]{Ni-tracevar}.
\subsubsection*{Point counting} If $F$ is a finite field, then there exists a unique ring morphism
$$\sharp:K_0(Var_F)\rightarrow \Z$$ that maps $[X]$ to the
cardinality of the set $X(F)$ for every $F$-variety $X$.
\subsubsection*{Euler characteristic} If $F$ is a field, and $\ell$ a prime invertible in $F$, then
there exists a unique ring morphism
$$\chi_{top}:K_0(Var_F)\rightarrow \Z$$ that maps $[X]$ to the
$\ell$-adic Euler characteristic of $X$ for every $F$-variety $X$.
It localizes to a ring morphism
$$\chi_{top}:\mathcal{M}_F\rightarrow \Z$$
These morphisms are independent of $\ell$.

\subsubsection*{Galois realization}
Let $F$ be a field, and $\ell$ a prime invertible in $F$. We fix a
separable closure $F^s$ of $F$. We denote by $G_F$ the absolute
Galois group of $F$, and by $K_0(Rep_{G_F}\Q_\ell)$ the
Grothendieck ring of $\ell$-adic Galois representations of $F$.
There exists a unique ring morphism
$$Gal:K_0(Var_F)\rightarrow K_0(Rep_{G_F}\Q_\ell)$$ that maps $[X]$ to
$$\sum_{i=0}^{2\cdot \mathrm{dim}(X)}(-1)^i [H_c^i(X\times_F
F^s,\Q_\ell)]\quad \in K_0(Rep_{G_F}\Q_\ell)$$
 for every $F$-variety $X$. It localizes to a ring morphism
$$Gal:\mathcal{M}_{F}\rightarrow K_0(Rep_{G_F}\Q_\ell)$$
\subsubsection*{\'Etale realization}
Let $\ell$ be a prime, and $S$ a Noetherian $\Z[1/\ell]$-scheme.
There exists a unique ring morphism
$$\acute{e}t:K_0(Var_S)\rightarrow K_0(D^b_c(S,\Q_\ell))$$ that maps $[X]$ to
the class of $R(g_X)_!\Q_\ell$
 for every $S$-variety $X$, where we denote by $g_X:X\rightarrow S$ the structural morphism.
 It localizes to a ring morphism
$$\acute{e}t:\mathcal{M}_{S}\rightarrow K_0(D^b_c(S,\Q_\ell))$$
\subsubsection*{Poincar\'e realization}
Let $S$ be a Noetherian scheme. The Poincar\'e realization
$$P_S:K_0(Var_S)\rightarrow \mathcal{C}(S,\Z[T])$$ is defined in
\cite[8.12]{Ni-tracevar}. The target $\mathcal{C}(S,\Z[T])$ is the
ring of constructible functions on $S$ with values in $\Z[T]$
\cite[\S\,8.3]{Ni-tracevar}.
\section{The modified Grothendieck ring}
\subsection{Trivializing universal homeomorphisms}
Recall that a morphism of schemes $$f:X\rightarrow Y$$ is called a
{\em universal homeomorphism} if for every morphism of schemes
$Y'\rightarrow Y$, the morphism
$$f_{Y'}:X\times_Y Y'\rightarrow Y'$$ obtained from $f$ by base change is
a homeomorphism \cite[2.4.2]{ega4.2}. This property is obviously
stable under base change. If $f$ is of finite presentation, then
$f$ is a universal homeomorphism iff $f$ is finite, surjective,
and purely inseparable \cite[8.11.6]{ega4.3}. We call two schemes
$X$ and $Y$ {\em universally homeomorphic} if there exists a
universal homeomorphism from $X$ to $Y$ or from $Y$ to $X$. If $S$
is a scheme and $X$ and $Y$ are $S$-schemes, then we call $X$ and
$Y$ universally $S$-homeomorphic if there exists a universal
homeomorphism of $S$-schemes $X\rightarrow Y$ or $Y\rightarrow X$.

\begin{prop}\label{prop-pis}
If $f:X\rightarrow Y$ is a universal homeomorphism of finite type
between Noetherian $\Q$-schemes, then there exists a finite
partition $\{Y_1,\ldots,Y_r\}$ of $Y$ into locally closed subsets,
such that, if we endow $Y_i$ with its reduced induces structure,
the morphism $(X\times_Y Y_i)_{red}\rightarrow Y_i$ is an
isomorphism for each $i\in \{1,\ldots,r\}$.
\end{prop}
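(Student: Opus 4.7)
The plan is a Noetherian induction on the underlying topological space $|Y|$. Since $f$ is finite by \cite[8.11.6]{ega4.3}, and the properties ``finite type'', ``universal homeomorphism'', and ``Noetherian $\Q$-scheme'' are all stable under base change, it is enough to exhibit a dense open $U\subseteq Y$ such that $(X\times_Y U)_{red}\to U$ is an isomorphism and then apply the inductive hypothesis to $X\times_Y(Y\setminus U)_{red}\to(Y\setminus U)_{red}$. Two preliminary reductions simplify the task. First, $X_{red}\to Y$ factors uniquely through $Y_{red}$ by the universal property of reduction, yielding a universal homeomorphism $X_{red}\to Y_{red}$ of finite type between Noetherian $\Q$-schemes; and since the nilradical of any $\mathcal{O}_Y$-algebra pulls back into the nilradical under base change, $(X\times_Y U)_{red}=(X_{red}\times_Y U)_{red}$, so I may replace $X$ and $Y$ by their reductions. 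Second, shrinking $Y$ to an affine open contained in a single irreducible component, I may assume $Y=\Spec R$ is integral with function field $K$; note that $\mathrm{char}(K)=0$ because $Y$ is a $\Q$-scheme. Write $X=\Spec B$ with $B$ a finite reduced $R$-algebra.

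Let $\eta$ be the generic point of $Y$. Since $f$ is a bijective finite morphism, the generic fiber $X_\eta=\Spec(B\otimes_R K)$ is supported at a single point, so $B\otimes_R K$ is a finite local $K$-algebra. Its residue field is a purely inseparable extension of $K$, and the characteristic-zero hypothesis (used here for the only time) forces this extension to be trivial. Moreover $B\otimes_R K$ is a localization of the reduced ring $B$, hence reduced; being Artinian, local and reduced, it is a field, so $B\otimes_R K=K$. Therefore the $R$-algebra morphism $\varphi\colon R\to B$ becomes the identity after tensoring with $K$, which means that $\ker\varphi$ and $\mathrm{coker}\,\varphi$ are finitely generated $R$-modules vanishing at $\eta$. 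Their supports are then proper closed subsets of $Y$, and on the complementary dense open $U$ the map $\varphi$ restricts to an isomorphism, so $X\times_Y U\to U$ is itself an isomorphism, which completes the inductive step.

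The main potential obstacle lies not in the core argument, which is a standard generic-flatness style spreading out, but in the bookkeeping around the reductions: one must verify that replacing $X$ by $X_{red}$ is legitimate (this is precisely where the fact that the conclusion is phrased modulo nilpotents matters), and that a partition of $(Y\setminus U)_{red}$ into locally closed subsets produced by induction assembles with $U$ into a partition of the original $Y$ by locally closed subsets, using that a locally closed subset of a locally closed subset of $Y$ is locally closed in $Y$. Once these compatibilities are recorded, the characteristic-zero hypothesis enters in a single clean step, namely to trivialize the purely inseparable residue extension at the generic point of each irreducible component of $Y_{red}$.
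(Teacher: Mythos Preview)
Your proof is correct and follows essentially the same route as the paper's: Noetherian induction, reduction to the reduced integral case, and the observation that the purely inseparable residue field extension at the generic point is trivial in characteristic zero, after which one spreads out to an open over which $f$ is an isomorphism. The only difference is cosmetic: you work affinely and make the spreading-out explicit via the kernel and cokernel of $R\to B$, whereas the paper phrases the same step as ``$f$ is birational, hence an open immersion on a dense open.''
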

\begin{proof}
 Since
$f_{red}:X_{red}\rightarrow Y_{red}$ is still a universal
homeomorphism \cite[2.4.3(vi)]{ega4.2}, we may assume that $X$ and
$Y$ are reduced. By Noetherian induction, it is enough to find a
non-empty open subscheme $U$ of $Y$ such that $X\times_Y
U\rightarrow U$ is an isomorphism. In particular, we may assume
that $Y$ is irreducible. Then $X$ is irreducible, because it is
homeomorphic to $Y$. If we denote by $\eta_Y$ the generic point of
$Y$, then its inverse image in $X$ consists of a unique point
$\eta_X$, which is the generic point of $X$. The residue field
$\kappa(\eta_X)$ is a purely inseparable extension of the residue
field $\kappa(\eta_Y)$ of $\eta_Y$. Since these fields have
characteristic zero, we see that $f$ induces an isomorphism
$\kappa(\eta_X)\cong \kappa(\eta_Y)$, so that the restriction of
$f$ to some dense open subset of $X$ is an open immersion. This
concludes the proof.
\end{proof}

\begin{definition}\label{def-mod}
Let $S$ be a Noetherian scheme. We denote by $I^{uh}_S$ the ideal
in $K_0(Var_S)$ generated by elements of the form $[X]-[Y]$, where
$X$ and $Y$ are universally $S$-homeomorphic separated $S$-schemes
of finite type. We put $$K_0\langle Var_S\rangle
=K_0(Var_S)/I^{uh}_S$$ and we call this quotient the modified
Grothendieck ring of $S$-varieties.

For every separated $S$-scheme of finite type $Z$, we denote by
$\langle Z\rangle $ the image of $[Z]$ in $K_0\langle Var_S\rangle
$. We put $\widetilde{\LL}_S=\langle \A^1_S\rangle $, and we
denote by $\mathcal{M}^{mod}_S$ the localization of $K_0\langle
Var_S\rangle $ with respect to $\widetilde{\LL}_S$.

The dimensional completion $\widehat{\mathcal{M}}_S^{mod}$  is the
separated completion of $\mathcal{M}_S^{mod}$ with respect to the
descending filtration $F^{\bullet}\mathcal{M}_S^{mod}$, where for
every $i\in \Z$,  $F^i\mathcal{M}_S^{mod}$ is the subgroup of
$\mathcal{M}_S^{mod}$ generated by the elements of the form $\la X
\ra \widetilde{\LL}_S^j$ with $X$ a separated $S$-scheme of finite
type and $j$ an element of $\Z$ such that
$$\mathrm{dim}(X/S)+j\leq -i$$
Here $\mathrm{dim}(X/S)$ denotes the relative dimension of $X$
over $S$.
\end{definition}

If $S=\Spec A$ for some Noetherian ring $A$, then we also write
$I^{uh}_A$, $K_0\langle Var_A\rangle $, $\widetilde{\LL}_A$,
$\mathcal{M}^{mod}_A$ and $\widehat{\mathcal{M}}^{mod}_A$ instead
of $I^{uh}_S$, $K_0\langle Var_S\rangle $, $\widetilde{\LL}_S$,
$\mathcal{M}^{mod}_S$ and $\widehat{\mathcal{M}}^{mod}_S$.

Let $S$ be a Noetherian scheme, $X$ a separated $S$-scheme of
finite type, and $C$ constructible subset of $X$. We can write $C$
as a disjoint union of locally closed subsets $C_1,\ldots,C_r$ of
$X$. If we endow $C_i$ with its reduced induced structure, for
every $i$, then the class $[C]$ of $C$ in $K_0(Var_S)$ is defined
by
$$[C]=[C_1]+\ldots + [C_r]$$
This definition does not depend on the choice of $C_1,\ldots,C_r$.
We denote by $\langle C\rangle $ the image of $[C]$ in $K_0\langle
Var_S\rangle $.

\begin{prop}\label{prop-zero}
If $S$ is a Noetherian scheme over $\Q$, then $I^{uh}_S$ is the
zero ideal, and the projection
$$K_0(Var_S)\rightarrow K_0\langle Var_S\rangle $$ is an isomorphism.
\end{prop}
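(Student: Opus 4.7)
The plan is to show that every generator $[X]-[Y]$ of $I^{uh}_S$ already vanishes in $K_0(Var_S)$; the surjectivity of the projection is tautological, so this gives injectivity and hence the isomorphism.

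So suppose $f\colon X\to Y$ is a universal homeomorphism of separated $S$-schemes of finite type (by symmetry it suffices to handle one direction). Since the relations in $K_0(Var_S)$ already identify $[X]$ with $[X_{red}]$, and since $f_{red}\colon X_{red}\to Y_{red}$ remains a universal homeomorphism, I may assume $X$ and $Y$ are reduced $S$-varieties. Because $S$ lies over $\Q$, so do $X$ and $Y$, and Proposition \ref{prop-pis} applies: there is a finite partition of $Y$ into locally closed subsets $Y_1,\ldots,Y_r$ (with reduced induced structure) such that the natural morphism $(X\times_Y Y_i)_{red}\to Y_i$ is an isomorphism for each $i$.

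Now I invoke the scissor relations twice. On the $Y$-side, $\{Y_i\}$ is a locally closed partition, giving $[Y]=\sum_{i=1}^r [Y_i]$ in $K_0(Var_S)$. On the $X$-side, the preimages $X\times_Y Y_i$ form a locally closed partition of $X$, so $[X]=\sum_{i=1}^r [X\times_Y Y_i]$. But in $K_0(Var_S)$ each class equals the class of the associated reduced scheme, and by Proposition \ref{prop-pis} the reduced scheme $(X\times_Y Y_i)_{red}$ is isomorphic to $Y_i$ as an $S$-scheme (the isomorphism is over $Y$, hence over $S$). Therefore $[X\times_Y Y_i]=[Y_i]$ for each $i$, and summing gives $[X]=[Y]$ in $K_0(Var_S)$. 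This shows $I^{uh}_S=0$, so the projection $K_0(Var_S)\to K_0\langle Var_S\rangle$ is an isomorphism.

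There is no genuine obstacle here: the whole content is Proposition \ref{prop-pis}, whose characteristic-zero hypothesis is precisely what forces the residue-field extension at the generic point to be trivial. The only points to watch are the reduction step (to put ourselves in the setting of reduced varieties before invoking \ref{prop-pis}) and the fact that the stratifying isomorphism is over $Y$, which guarantees it is also an isomorphism of $S$-schemes so that the class identifications take place in $K_0(Var_S)$ rather than merely in some Grothendieck ring of $Y$-varieties.
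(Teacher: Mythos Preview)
Your proof is correct and follows exactly the approach sketched in the paper: reduce to Proposition~\ref{prop-pis} and then read off $[X]=[Y]$ from the scissor relations. The paper's proof is a one-line reference to precisely these two ingredients; you have simply unpacked the details (the reduction to reduced schemes and the verification that the stratified isomorphisms are over $S$), which is entirely appropriate.
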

\begin{proof}
This follows immediately from Proposition \ref{prop-pis} and the
scissor relations in the Grothendieck ring $K_0(Var_S)$.
\end{proof}

If $S$ is not a $\Q$-scheme, we do not know if $I^{uh}_S$ is
different from zero. If $S$ is the spectrum of a field $F$ of
positive characteristic, then $I^{uh}_S$ is trivial if and only if
Question \ref{que-uhom} in the introduction has a positive answer.


\subsection{Base change and direct image}
The definitions of the modified Grothendieck ring $K_0\langle
Var_S\rangle $ and its localization $\mathcal{M}^{mod}_S$ are
compatible with base change and direct image. If $f:T\rightarrow
S$ is a morphism of Noetherian schemes, then there exists a unique
ring morphism
$$f^*:K_0\langle Var_S\rangle \rightarrow K_0\langle Var_T\rangle $$ such that
$f^*\langle X\rangle =\langle X\times_S T\rangle $ for every
separated $S$-scheme $X$ of finite type. It localizes to a ring
morphism
$$f^*:\mathcal{M}^{mod}_S\rightarrow \mathcal{M}^{mod}_T$$
If $g:S\rightarrow U$ is a separated morphism of finite type
between Noetherian schemes, then there exists a unique morphism of
abelian groups
$$g_{!}:K_0\langle Var_S\rangle \rightarrow K_0\langle Var_U\rangle $$ such that for every
separated $S$-scheme $X$ of finite type, we have $g_!\langle
X\rangle =\langle X|_U\rangle $ (here $X|_U$ denotes the
$U$-scheme obtained by composing the structural morphism
$X\rightarrow S$ with the morphism $g$). Moreover, there exists a
unique morphism of abelian groups
$$g_!:\mathcal{M}^{mod}_S\rightarrow \mathcal{M}^{mod}_U$$
such that
$$g_!(\langle X\rangle \widetilde{\LL}^i_S)=\langle X|_U\rangle \widetilde{\LL}^i_U$$ for
every separated $S$-scheme of finite type $X$ and every integer
$i$.
\subsection{Fibrations}
\begin{lemma}\label{lemm-fibration}
\label{triviale fib rect} Let $S$ be a Noetherian scheme, and let
$X$, $Y$ and $Z$ be separated $S$-schemes of finite type.  Let
$f:X\rightarrow Y$ be a morphism of $S$-schemes, and assume that
for every perfect field $F$ and every morphism of schemes $\Spec
F\rightarrow Y$, there exists a universal homeomorphism of
$F$-schemes
$$X\times_Y \Spec F \rightarrow Z\times_S \Spec F$$
 Then
$$\langle X\rangle =\langle Y\rangle \cdot\langle Z\rangle $$ in $K_0\langle Var_S\rangle $.
  \end{lemma}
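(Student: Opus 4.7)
The plan is to use Noetherian induction on $Y$ combined with the scissor relations, reducing the claim to the existence, for any integral $Y$ with the data of the lemma, of a non-empty open subscheme $V\subseteq Y$ satisfying $\langle X\times_Y V\rangle = \langle V\rangle\cdot\langle Z\rangle$ in $K_0\langle Var_S\rangle$. The inductive step is then routine: the hypotheses of the lemma transfer from $Y$ to any closed subscheme by base change, $\langle-\rangle$ is invariant under $(-)_{red}$ since $W_{red}\to W$ is a universal homeomorphism, and once the identity holds on some $V$ the scissor relation combined with the inductive identity on $Y\setminus V$ yields $\langle X\rangle = \langle V\rangle\langle Z\rangle + \langle Y\setminus V\rangle\langle Z\rangle = \langle Y\rangle\langle Z\rangle$. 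The reduction to the integral case is itself an instance of Noetherian induction, by shrinking $V$ to lie inside a single irreducible component of the reduced locus.

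For the integral case, let $\eta$ be the generic point of $Y$, set $K=\kappa(\eta)$, and let $K^{\mathrm{perf}}$ be its perfect closure. Applying the hypothesis with $F=K^{\mathrm{perf}}$ supplies a universal homeomorphism of $K^{\mathrm{perf}}$-schemes
$$\phi : X\times_Y \Spec K^{\mathrm{perf}}\longrightarrow Z\times_S \Spec K^{\mathrm{perf}}.$$
Since $K^{\mathrm{perf}}=\varinjlim K_j$ is the filtered colimit of its finite purely inseparable subextensions $K_j/K$ and both source and target of $\phi$ are of finite presentation over $K^{\mathrm{perf}}$, the standard limit arguments of \cite[\S\,8]{ega4.3}, applied to the three defining properties (finite, surjective, purely inseparable) of universal homeomorphisms of finite presentation recalled before Proposition~\ref{prop-pis}, produce an index $j$ and a universal homeomorphism of $K_j$-schemes $\phi_j : X\times_Y \Spec K_j \to Z\times_S \Spec K_j$ whose base change to $K^{\mathrm{perf}}$ recovers $\phi$.

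Next, realize $\Spec K_j\to\Spec K$ as the generic fiber of a finite purely inseparable (hence universally homeomorphic) morphism $g : Y'\to V$, where $V\subseteq Y$ is a suitable non-empty open subscheme. Concretely, from a presentation $K_j=K[t_1,\ldots,t_n]/(t_m^{p^{e_m}}-a_m)$ one chooses $V$ small enough for the $a_m$ to be regular on $V$ and sets $Y'=\Spec_V\mathcal{O}_V[t_1,\ldots,t_n]/(t_m^{p^{e_m}}-a_m)$. One further spreading-out step, possibly shrinking $V$ once more (using that $g$ is finite, so the locus of $v\in V$ over which a given partially defined morphism extends over the entire fiber $g^{-1}(v)$ is open), extends $\phi_j$ to a universal $S$-homeomorphism $\phi_V : X\times_Y Y'\to Z\times_S Y'$. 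Since the canonical projections $X\times_Y Y'\to X\times_Y V$ and $Z\times_S Y'\to Z\times_S V$ are base changes of $g$ along $X\times_Y V\to V$ and $Z\times_S V\to V$ respectively, they are themselves universal $S$-homeomorphisms, so in $K_0\langle Var_S\rangle$ one obtains
$$\langle X\times_Y V\rangle = \langle X\times_Y Y'\rangle = \langle Z\times_S Y'\rangle = \langle Z\times_S V\rangle = \langle V\rangle\cdot\langle Z\rangle,$$
which is the required local identity.

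The main technical obstacle is the two-stage spreading-out of the second and third paragraphs: first descending $\phi$ along the filtered colimit of fields $K^{\mathrm{perf}}=\varinjlim K_j$, and then extending $\phi_j$ from the generic fiber $\Spec K_j$ to a finite cover $Y'$ of a dense open $V\subseteq Y$, in each case preserving the universal homeomorphism property. Both descents are standard applications of the finitely-presented-limit machinery of \cite[\S\,8]{ega4.3}, but some care is needed to coordinate the spread-out domains of $g$ and $\phi_j$ so that both live over a common base $V$.
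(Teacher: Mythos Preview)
Your proof is correct and follows essentially the same strategy as the paper's: Noetherian induction reducing to integral $Y$, applying the hypothesis at the perfect closure of the function field, spreading out to a universal homeomorphism over a cover $Y'\to V$ that is itself a universal homeomorphism onto a dense open, and then reading off the identity from the resulting square of universal homeomorphisms. The only cosmetic difference is that the paper carries out the spreading-out in a single step, writing the perfect closure directly as a filtered colimit of finitely generated sub-$B$-algebras (where $Y=\Spec B$), rather than first descending to a finite purely inseparable extension $K_j$ and then spreading out from $K_j$ to $Y'$.
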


\begin{proof}
By Noetherian induction, it is enough to find a non-empty open
subscheme $U$ of $Y$ such that $$\la X\times_Y U\ra =\la U\ra
\cdot \la Z\ra$$ in $K_0\langle Var_S\rangle $. We may assume that
$Y$ is affine and integral. We denote by $B$ the ring of regular
functions on $Y$. Let $F$ be the perfect closure of the function
field $Frac(B)$ of $Y$. We know that there exists a universal
homeomorphism of $F$-schemes
$$g:X\times_Y \Spec F \rightarrow Z\times_S \Spec F$$

The $B$-algebra $F$ is the direct limit of its finitely generated
sub-$B$-algebras. Hence, by \cite[8.8.2 and 8.10.5]{ega4.3}, there
exist a finitely generated sub-$B$-algebra $B'$ of $F$, and a
universal homeomorphism of $B'$-schemes
 $$g':X\times_Y \Spec B' \rightarrow Z\times_S \Spec B' $$  such
 that $g$ is obtained from $g'$ by base change
 from $\Spec B'$ to $\Spec F$.

Since $\Spec B'\rightarrow Y$ is purely inseparable over the
generic point of $Y$, and the generic point of $Y$ is the
projective limit of the dense open subschemes of $Y$, it follows
from \cite[8.10.5]{ega4.3} that there exists a dense open
subscheme $U$ of $Y$ such that $$\Spec B'\times_Y U\rightarrow U$$
is a universal homeomorphism.

Looking at the diagram of universal homeomorphisms of separated
$S$-schemes of finite type
$$\begin{CD}
X\times_Y (\Spec B'\times_Y U) @>g'\times_Y id_U>> Z\times_S
(\Spec B'\times_Y U)
\\ @VVV @VVV
\\ X\times_Y U @. Z\times_S U
\end{CD}$$
we find that
$$\la X\times_Y U\ra =\la U\ra \cdot\la Z\ra$$ in $K_0\langle Var_S\rangle $.
\end{proof}

\subsection{The Grothendieck ring of the theory $AFC_F$.}
The modified Grothendieck ring arises naturally in the setting of
model theory. Let $F$ be a field, and denote by $ACF_F$ the theory
of algebraically closed fields over $F$ in the language
$\mathcal{L}_F$ of $F$-algebras.

Recall that formulas in $\mathcal{L}_F$ consist of quantifiers and
Boolean combinations of polynomial equations with coefficients in
$F$. For every formula $\varphi(x_1,\ldots,x_n)$ in
$\mathcal{L}_F$ and every $F$-algebra $A$, we denote by
$S_{\varphi}(A)$ the subset of $A^n$ defined by $\varphi$.

We say that two formulas $\varphi(x_1,\ldots,x_m)$ and
$\psi(y_1,\ldots,y_n)$ in $\mathcal{L}_F$ are $ACF_F$-equivalent,
if there exists a third formula
$\eta(x_1,\ldots,x_m,y_1,\ldots,y_n)$ such that, for every
algebraically closed field $L$ that contains $F$, the set
$S_{\eta}(L)\subset L^{m+n}$ is the graph of a bijection between
 $S_{\varphi}(L)\subset L^m$ and $S_{\psi}(L)\subset L^n$. We say
 that $\eta$ defines an $ACF_F$-equivalence between $\varphi$ and
 $\psi$.

\begin{definition}
Let $F$ be a field. The Grothendieck group $K_0(ACF_F)$ of the
theory $ACF_F$ is the quotient of the free abelian group on
$ACF_F$-equivalence classes $[\varphi]$ of formulas $\varphi$ in
$\mathcal{L}_F$, by the subgroup generated by elements of the form
$$[\varphi\wedge \psi]+[\varphi\vee \psi]-[\varphi]-[\psi]$$
where $\varphi$ and $\psi$ are formulas in $\mathcal{L}_F$ with
the same sets of free variables.

We endow $K_0(ACF_F)$ with the unique ring structure such that,
for all formulas $\varphi$ and $\psi$ in $\mathcal{L}_F$ in
disjoint sets of free variables, we have
$$[\varphi]\cdot [\psi]=[\varphi\wedge \psi]$$
in $K_0(ACF_F)$.
\end{definition}

The unit for the multiplication in $K_0(ACF_F)$ is the class of
the formula $\psi=(0=0)$. For every $F$-algebra $A$, this formula
defines the set $S_{\psi}(A)=A^0=\{pt\}$.

If $n$ is an element of $\N$, and $i_X:X\rightarrow \A^n_F$ is an
immersion of $F$-schemes, then there exists a formula
$\varphi(x_1,\ldots,x_n)$ such that $$S_{\varphi}(L)=X(L)\subset
L^n$$ for every field $L$ that contains $F$. We call such a
formula $\varphi$ an $i_X$-formula. It is not unique. If $Y$ is a
quasi-affine $F$-variety, then we say that a formula $\psi$ in
$\mathcal{L}_F$ is a $Y$-formula if it is an $i_Y$-formula for
some immersion $i_Y:Y\rightarrow \A^m_F$, with $m\in \N$. Again,
such a $Y$-formula is not unique, but its class $[\psi]$ in
$K_0(ACF_F)$ only depends on $Y$, and not on the choice of the
immersion $i_Y$ or the formula $\psi$.

\begin{lemma}\label{lemm-graph}
Let $F$ be a field. Let $m$ and $n$ be elements of $\N$, and let
$C$ and $D$ be constructible subsets of $\mathbb{A}^m_F$, resp.
$\mathbb{A}^n_F$. Assume that there exists a formula
$\eta(x_1,\ldots,x_m,y_1,\ldots,y_n)$ in $\mathcal{L}_F$ such
that, for every algebraically closed field $L$ containing $F$, the
set
$$S_{\eta}(L)\subset L^{m+n}$$ is the graph of a bijection between
$C(L)\subset L^m$ and $D(L)\subset L^n$. Then we have
$$\langle C\rangle =\langle D\rangle $$ in $K_0\langle Var_F\rangle $.
\end{lemma}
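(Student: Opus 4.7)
The plan is to apply Lemma~\ref{lemm-fibration} twice, with $Z=\Spec F$, in order to identify both $\langle C\rangle$ and $\langle D\rangle$ with the class of the ``graph'' $\eta$. Let $E\subset\A^{m+n}_F$ denote the constructible subset defined by $\eta$, and let $\pi_C:E\to C$, $\pi_D:E\to D$ be the maps induced by the two coordinate projections $\A^{m+n}_F\to\A^m_F$ and $\A^{m+n}_F\to\A^n_F$. By hypothesis, $\pi_C(L)$ and $\pi_D(L)$ are bijections for every algebraically closed field $L\supset F$. By symmetry it suffices to prove $\langle E\rangle=\langle C\rangle$.

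To place ourselves in the setting of Lemma~\ref{lemm-fibration}, I would choose a stratification $C=\bigsqcup_j C_j$ into locally closed $F$-subvarieties of $\A^m_F$ and, for each $j$, a stratification of the constructible set $\pi_C^{-1}(C_j)$ into locally closed subvarieties $\{E_{ij}\}_i$ of $\A^{m+n}_F$. Using the scissor relations in $K_0\langle Var_F\rangle$ extended to constructible sets, and the fact that $\langle\Spec F\rangle$ is the unit of $K_0\langle Var_F\rangle$, it is enough to prove that for each $j$ one has $\langle\bigsqcup_i E_{ij}\rangle=\langle C_j\rangle$; this would follow from Lemma~\ref{lemm-fibration} applied with $X=\bigsqcup_i E_{ij}$, $Y=C_j$ and $Z=\Spec F$.

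The substantive step, and the one I expect to be the main obstacle, is the verification of the hypothesis of Lemma~\ref{lemm-fibration}: for every perfect field $K$ and every $K$-point $\Spec K\to C_j$, the fiber $X_K:=X\times_{C_j}\Spec K$ must be universally $K$-homeomorphic to $\Spec K$. Restricting the given bijection to this fiber shows that $X_K(L)$ is a singleton for every algebraically closed extension $L/K$. First, $X_K$ must be $0$-dimensional: a point with residue field of positive transcendence degree over $K$ would, for $L$ of sufficiently large transcendence degree, give rise to infinitely many $L$-points. Hence $X_K=\bigsqcup_i\Spec A_i$ is a finite disjoint union of spectra of local artinian $K$-algebras of finite length, with residue fields $\kappa_i$ finite over $K$. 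For $L$ algebraically closed and large enough to contain each $\kappa_i$, one computes $|X_K(L)|=\sum_i[\kappa_i:K]_s=1$, so there is only one factor $A$ and its residue field is purely inseparable over $K$. Since $K$ is perfect, this residue field equals $K$, so $X_K=\Spec A$ with $A$ a finite local $K$-algebra of residue field $K$, and the structural morphism $\Spec A\to\Spec K$ is finite, surjective and radicial, hence a universal homeomorphism by \cite[8.11.6]{ega4.3}.

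Applying Lemma~\ref{lemm-fibration} for each $j$ and summing yields $\langle E\rangle=\langle C\rangle$; running the symmetric argument with $\pi_D$ in place of $\pi_C$ gives $\langle E\rangle=\langle D\rangle$, whence $\langle C\rangle=\langle D\rangle$ in $K_0\langle Var_F\rangle$.
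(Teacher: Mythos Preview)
Your proof is correct, but it proceeds differently from the paper's. The paper argues by direct Noetherian induction on $C$: assuming $C$ irreducible and reduced, it chooses an integral subscheme $E'\subset\A^{m+n}_F$ contained in $E$ and dominant over $C$, observes that $E'\to C$ is quasi-finite, restricts to a dense open $U\subset C$ over which this becomes finite and surjective, and then uses the bijection hypothesis on $L$-points to conclude that $V:=E'\times_C U\to U$ is purely inseparable (hence a universal homeomorphism) and that $V$ exhausts $E\cap\pi^{-1}(U)$. You instead package the Noetherian induction inside Lemma~\ref{lemm-fibration}: after stratifying, you verify its fibrewise hypothesis via a direct analysis of a zero-dimensional finite-type scheme over a perfect field with a single $L$-point for every algebraically closed $L$. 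Your route is a bit more conceptual in that it isolates the fibre condition cleanly and reuses the spreading-out machinery already built into Lemma~\ref{lemm-fibration}; the paper's route is more self-contained and avoids invoking that lemma. One small point you should make explicit: the existence of the constructible set $E$ with $E(L)=S_\eta(L)$ for all algebraically closed $L\supset F$ uses quantifier elimination for $ACF_F$, since $\eta$ may contain quantifiers.
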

\begin{proof}
By quantifier elimination relative to $ACF_F$, there exists a
unique constructible subset $E$ of $\A^{m+n}_F$ such that
$$S_{\eta}(L)=E(L)\subset L^{m+n}$$ for every algebraically closed field $L$ that contains $F$.
 It suffices to show that $\langle C\rangle =\langle E\rangle $ in $K_0\langle Var_F\rangle $.
  We denote by
$$\pi:\mathbb{A}^{m+n}_F\rightarrow \mathbb{A}^m_F$$ the projection
onto the first $m$ coordinates. By Noetherian induction on $C$,
 it is enough to prove that there exists a non-empty open subset
 $U$ of $C$ such that $U$ is locally closed in $\A^m_F$,
 $V=\pi^{-1}(U)\cap E$ is locally closed in $\A^{m+n}_F$, and the
 morphism
 $V\rightarrow U$ induced by $\pi$ is a universal homeomorphism if we endow $U$
 and $V$ with the reduced induced structure.


 We may assume that $C$ is irreducible and locally
closed in $\mathbb{A}^m_F$, and we endow $C$ with its reduced
induced structure.
 Let $E'$ be an integral
 subscheme of $\mathbb{A}^{m+n}_F$ whose support is contained in $E$ and such that the
morphism $\pi':E'\rightarrow C$ induced by $\pi$ is dominant. Then
$\pi'$ is a quasi-finite morphism of $F$-varieties. Hence, there
exists a non-empty open subset $U$ of $C$ such that, putting
$V=E'\times_C U$, the morphism $V\rightarrow U$ is finite and
surjective. Surjectivity implies, in particular, that $V=E\cap
\pi^{-1}(U)$.
  It
follows from our assumptions that $V\rightarrow U$ is purely
inseparable, so that it is a universal homeomorphism.
\end{proof}

\begin{prop}\label{prop-acf}
Let $F$ be a field. There exists a unique ring morphism
$$\alpha:K_0(Var_F)\rightarrow K_0(ACF_F)$$ such that, for every
 quasi-affine $F$-variety $X$, the image of
$[X]$ under $\alpha$ is the class in $K_0(ACF_F)$ of an
$X$-formula $\varphi_X$.

The morphism $\alpha$ is surjective, and its kernel equals
$I^{uh}_F$, so that $\alpha$ factors through an isomorphism
$$K_0\langle Var_F\rangle \rightarrow K_0(ACF_F)$$
\end{prop}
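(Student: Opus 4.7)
The plan is to construct $\alpha$ explicitly on generators of $K_0(Var_F)$, verify that it vanishes on $I^{uh}_F$, and then build a two-sided inverse to the induced map on the modified Grothendieck ring.

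First, I would construct $\alpha$. For a quasi-affine $F$-variety $X$, the paragraph preceding Lemma \ref{lemm-graph} has already noted that the class $[\varphi_X] \in K_0(ACF_F)$ is independent of the choice of immersion $i_X$ and of the $i_X$-formula $\varphi_X$. For an arbitrary separated $F$-scheme $X$ of finite type I would fix a stratification $X = X_1 \sqcup \cdots \sqcup X_r$ by locally closed quasi-affine subvarieties and set $\alpha([X]) = \sum_i [\varphi_{X_i}]$. Independence of this definition from the stratification follows from a common-refinement argument together with the lattice relation $[\varphi \wedge \psi] + [\varphi \vee \psi] = [\varphi] + [\psi]$ defining $K_0(ACF_F)$, and compatibility with the scissor relations in $K_0(Var_F)$ is then formal; the ring structure is preserved because $[\varphi] \cdot [\psi] = [\varphi \wedge \psi]$ on disjoint variables matches the fiber product of two embeddings.

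Next, to show that $\alpha$ factors through $K_0\langle Var_F\rangle$, let $f \colon X \to Y$ be a universal $F$-homeomorphism of separated finite type $F$-schemes. By stratifying $Y$ compatibly and pulling back, I may assume that $X$ and $Y$ are quasi-affine and embedded in affine spaces $\A^m_F$ and $\A^n_F$. The graph of $f$ is closed in $X \times_F Y$, hence defined by a formula $\eta$ in $\mathcal{L}_F$. For every algebraically closed field $L$ containing $F$, the induced map $X(L) \to Y(L)$ is bijective: surjective because $f$ is finite and surjective and $L$ is algebraically closed, and injective because $f$ is radicial. Thus $\eta$ exhibits an $ACF_F$-equivalence between an $X$-formula and a $Y$-formula, so $\alpha([X]) = \alpha([Y])$. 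Surjectivity of the induced morphism $\overline{\alpha} \colon K_0\langle Var_F\rangle \to K_0(ACF_F)$ is then immediate from quantifier elimination for $ACF_F$: any formula $\varphi$ in $n$ free variables is equivalent to a quantifier-free one defining a unique constructible subset $C_\varphi \subset \A^n_F$, and $\overline{\alpha}(\langle C_\varphi\rangle) = [\varphi]$.

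Finally, I would construct an inverse $\beta \colon K_0(ACF_F) \to K_0\langle Var_F\rangle$ by $\beta([\varphi]) = \langle C_\varphi\rangle$. The well-definedness of $\beta$ on $ACF_F$-equivalence classes is exactly the content of Lemma \ref{lemm-graph}, and the lattice relation translates into the scissor identity $\langle C_\varphi \cap C_\psi\rangle + \langle C_\varphi \cup C_\psi\rangle = \langle C_\varphi\rangle + \langle C_\psi\rangle$ in $K_0\langle Var_F\rangle$. Multiplicativity follows because $C_{\varphi \wedge \psi} = C_\varphi \times_F C_\psi$ when $\varphi$ and $\psi$ have disjoint sets of free variables. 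By construction, $\beta$ and $\overline{\alpha}$ are mutually inverse on the evident generating sets, hence on the whole rings.

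The real technical input has already been packaged into Lemma \ref{lemm-graph}, which is precisely what makes $\beta$ well-defined on $ACF_F$-equivalence classes; the remaining work is bookkeeping, namely stratifying to reduce the definition of $\alpha$ to the quasi-affine case and systematically matching the defining relations of $K_0(ACF_F)$ with the scissor and product relations on constructible subsets of affine space.
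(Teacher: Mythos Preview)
Your proposal is correct and follows essentially the same route as the paper: both use the graph of a universal homeomorphism to show that $\alpha$ kills $I^{uh}_F$, and both construct the inverse via quantifier elimination together with Lemma~\ref{lemm-graph}. The only cosmetic difference is that the paper packages the construction of $\alpha$ through an auxiliary Grothendieck ring $K_0(QAff_F)$ of quasi-affine varieties (shown to be isomorphic to $K_0(Var_F)$), whereas you stratify directly and check independence of the stratification by hand; these amount to the same bookkeeping.
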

\begin{proof}
Uniqueness if clear, since the classes of affine $F$-varieties
generate the Grothendieck ring $K_0(Var_F)$. So let us prove the
existence of $\alpha$.

We define the Grothendieck ring $K_0(QAff_F)$ of quasi-affine
$F$-varieties as follows. As an abelian group, $K_0(QAff_F)$ is
the quotient of the free abelian group on isomorphism classes
$[X]'$ of quasi-affine $F$-varieties $X$ by the subgroup generated
by elements of the form $$[X]'-[Y]'-[X\setminus Y]'$$ with $X$ a
quasi-affine $F$-variety and $Y$ a closed subvariety of $X$. We
endow $K_0(QAff_F)$ with the unique ring structure such that for
all quasi-affine $F$-varieties $X$ and $Y$, we have
$$[X]'\cdot[Y]'=[(X\times_F Y)_{red}]'$$

It follows easily from the scissor relations in the Grothendieck
ring that there is a unique morphism of abelian groups
$$\beta:K_0(QAff_F)\rightarrow K_0(Var_F)$$ that maps $[X]'$ to
$[X]$ for every quasi-affine $F$-variety $X$, and that $\beta$ is
an isomorphism of rings.
 It is also straightforward to check that there exists a unique ring
morphism
$$\alpha':K_0(QAff_F)\rightarrow K_0(ACF_F)$$ that maps $[X]'$ to
$[\varphi_X]$ for every quasi-affine $F$-variety $X$, where
$\varphi_X$ is an $X$-formula. We can define $\alpha$ as
$\alpha'\circ \beta^{-1}$.

Now, we show that $ker(\alpha)$ contains the ideal $I^{uh}_F$. Let
$f:X\rightarrow Y$ be a universal homeomorphism of $F$-varieties.
We choose a partition $\{Y_1,\ldots,Y_r\}$ of $Y$ into locally
closed subsets, and we endow $Y_i$ with its reduced induced
structure, for each $i$. We may assume that $Y_i$ is affine for
every $i$. If we put $X_i=(X\times_Y Y_i)_{red}$, then the
morphism $X_i\rightarrow Y_i$ is a universal homeomorphism, $X_i$
is affine, and we have
\begin{eqnarray*}
 [X] &=& \sum_{i=1}^r [X_i]
\\ {[}Y] &=& \sum_{i=1}^r [Y_i]
\end{eqnarray*}
in $K_0(Var_F)$. So it is enough to prove that $\alpha([X]-[Y])=0$
if $f:X\rightarrow Y$ is a universal homeomorphism of affine
$F$-varieties. We denote by $\Gamma_f$ the graph of $f$ in
$X\times_F Y$, and we choose closed immersions $i_X:X\rightarrow
\A^m_F$ and $i_Y:Y\rightarrow \A^n_F$, with $m,\,n\in \N$. These
induce a closed immersion $i_f:\Gamma_f\rightarrow \A^{m+n}_F$. If
we choose an $i_X$- formula $\varphi_{X}$, an $i_Y$-formula
$\varphi_Y$ and an $i_f$-formula $\varphi_{f}$, then $\varphi_{f}$
defines an $ACF_F$-equivalence between $\varphi_X$ and
$\varphi_Y$, so that $\alpha([X])=\alpha([Y])$.

Hence, the morphism $\alpha$ factors through a ring morphism
$$\gamma:K_0\langle Var_F\rangle \rightarrow K_0(ACF_F)$$ We'll show that it
is an isomorphism by constructing its inverse. If
$\varphi(x_1,\ldots,x_m)$ is a formula in $\mathcal{L}_F$, then by
quantifier elimination relative to $ACF_F$,
 there exists a unique constructible subset $C_{\varphi}$ of
$\A^m_F$ such that, for every algebraically closed field $L$ that
contains $F$, the subsets $S_{\varphi}(L)$ and $C_{\varphi}(L)$ of
$L^m$ coincide. It follows from Lemma \ref{lemm-graph} that the
class $\langle C_{\varphi}\rangle $ of $C_{\varphi}$ in
$K_0\langle Var_F\rangle $ only depends on the $ACF_F$-equivalence
class of $\varphi$. It is easily seen that there exists a unique
ring morphism
$$\delta:K_0(ACF_F)\rightarrow K_0\langle Var_F\rangle $$ that maps
$[\varphi]$ to $[C_{\varphi}]$ for every formula $\varphi$ in
$\mathcal{L}_F$. This ring morphism is inverse to $\gamma$.
\end{proof}

\section{Compatibility with realization morphisms}\label{sec-real}
In this section, we'll prove that the realization morphisms from
Section \ref{sec-k0} factor through the modified Grothendieck
ring.

\begin{prop}\label{prop-real}
Let $S$ be a Noetherian scheme, and let $X$ and $Y$ be separated
$S$-schemes of finite type such that there exists a universal
homeomorphism of $S$-schemes
$$f:X\rightarrow Y$$
We denote by $g_X$ and $g_Y$ the structural morphisms from $X$,
resp. $Y$, to $S$.

\begin{enumerate}
\item If $S$ is the spectrum of a finite field $F$, then $X(F)$
and $Y(F)$ have the same cardinality. In particular, the point
counting realization
$$\sharp:K_0(Var_F)\rightarrow \Z$$
factors through a ring morphism
$$\sharp:K_0\langle Var_F\rangle \rightarrow \Z$$

\item Let $\ell$ be a prime invertible on $S$. The natural
morphism
 $$R(g_Y)_!(\Q_\ell)\rightarrow R(g_X)_!(\Q_\ell)$$ in $D^b_c(S,\Q_\ell)$ induced by $f$ is an
 isomorphism. In particular, the \'etale
 realization
 $$\acute{e}t:K_0(Var_S)\rightarrow K_0(D^b_c(S,\Q_\ell))$$
factors through a ring morphism
 $$\acute{e}t:K_0\langle Var_S\rangle \rightarrow K_0(D^b_c(S,\Q_\ell))$$
 which localizes to a ring morphism
  $$\acute{e}t:\mathcal{M}^{mod}_S\rightarrow K_0(D^b_c(S,\Q_\ell))$$

\item If $F$ is a field, and $\ell$ a prime number invertible in
$F$, then the Galois realization
$$Gal:K_0(Var_F)\rightarrow K_0(Rep_{G_F}\Q_\ell)$$
factors through a ring morphism
$$Gal:K_0\langle Var_F\rangle \rightarrow K_0(Rep_{G_F}\Q_\ell)$$
which localizes to a ring morphism
$$Gal:\mathcal{M}^{mod}_F \rightarrow K_0(Rep_{G_F}\Q_\ell)$$

 \item The Poincar\'e polynomials $P(g_X):S\rightarrow \Z[T]$ and
 $P(g_Y):S\rightarrow \Z[T]$ are equal. In particular, the
 Poincar\'e realization
$$P_S:K_0(Var_S)\rightarrow \mathcal{C}(S,\Z[T])$$ factors through
a ring morphism
$$P_S:K_0\langle Var_S\rangle \rightarrow \mathcal{C}(S,\Z[T])$$
which localizes to a ring morphism
$$P_S:\mathcal{M}^{mod}_S\rightarrow \mathcal{C}(S,\Z[T,T^{-1}])$$
 \end{enumerate}
\end{prop}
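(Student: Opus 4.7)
My plan is to prove part (2) first, as it is the main technical input, and to derive parts (3) and (4) from it; part (1) is treated separately by a direct argument on $F$-points.

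For part (2), the key input is the topological invariance of the small \'etale site under universal homeomorphisms: $f$ induces an equivalence between the \'etale topoi of $X$ and $Y$, so the adjunction unit $\Q_\ell\to Rf_*\Q_\ell$ is an isomorphism in $D^b_c(Y,\Q_\ell)$. Since $f$ is finite, hence proper, one has $Rf_!=Rf_*$, so $\Q_\ell\xrightarrow{\sim} Rf_!\Q_\ell$. Applying $R(g_Y)_!$ and using the factorization $R(g_X)_!=R(g_Y)_!\circ Rf_!$ gives the asserted isomorphism $R(g_Y)_!\Q_\ell \xrightarrow{\sim} R(g_X)_!\Q_\ell$. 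It follows that $\acute{e}t$ annihilates every generator of $I^{uh}_S$ and hence factors through $K_0\langle Var_S\rangle$. Localization at $\widetilde{\LL}_S$ is harmless because the class of $\A^1_S$ is already sent to an invertible element of $K_0(D^b_c(S,\Q_\ell))$.

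For part (1), I use that finite fields are perfect: for $y\in Y(F)$, the fiber $X\times_Y \Spec F$ is a universal homeomorphism over $\Spec F$ of finite type, hence of the form $\Spec F'$ for a finite purely inseparable extension $F'/F$; perfectness of $F$ forces $F'=F$, giving a unique preimage of $y$ in $X(F)$. For part (3), I specialize part (2) to $S=\Spec F$: the complex $R(g_X)_!\Q_\ell$ represents $\bigoplus_i H^i_c(X\times_F F^s,\Q_\ell)[-i]$ as a $G_F$-equivariant object, and the isomorphism from (2) is $G_F$-equivariant, so the alternating sums of cohomology agree in $K_0(Rep_{G_F}\Q_\ell)$. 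For part (4), $P(g_X)$ and $P(g_Y)$ are defined pointwise on $S$ by a polynomial in $T$ whose coefficients are built from the $\Q_\ell$-dimensions of the stalks of $R^i(g_X)_!\Q_\ell$ at geometric points of $S$; the stalkwise version of the isomorphism in (2) gives $P(g_X)=P(g_Y)$, and the localization to $\mathcal{C}(S,\Z[T,T^{-1}])$ is handled as in part (2).

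The main obstacle is part (2), specifically invoking the correct form of topological invariance of \'etale cohomology for an arbitrary universal homeomorphism of finite type (not merely a nilpotent thickening), including the case of imperfect residue fields in mixed characteristic which motivates the whole paper. Once this input is in hand, parts (1), (3) and (4) are essentially formal consequences of the functoriality of $R(g)_!$ and the defining relations of $K_0\langle Var_S\rangle$.
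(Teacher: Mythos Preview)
Your arguments for (1), (2), and (3) are correct and essentially match the paper's. In (2) you invoke the full topological invariance of the \'etale topos to obtain $\Q_\ell\xrightarrow{\sim}Rf_*\Q_\ell$ in one stroke, whereas the paper breaks this up: $f_*\Q_\ell\cong\Q_\ell$ from $f$ being a universal homeomorphism, and $R^jf_*\Q_\ell=0$ for $j>0$ from finiteness of $f$, then applies the Grothendieck spectral sequence for the composition $(g_Y)_!\circ f_!$. These are two packagings of the same input. Part (3) is, in both treatments, the specialization of (2) to $S=\Spec F$.

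Part (4), however, does not go through as you describe it. The Poincar\'e polynomial in this paper is the \emph{virtual weight polynomial}: for a variety $V$ over a field $F$, its coefficients record the dimensions of the weight-graded pieces of $H^\bullet_c(V\times_F \bar F,\Q_\ell)$, not merely the Betti numbers $\dim_{\Q_\ell}H^i_c$. At a geometric point $\bar s$ of $S$ the stalk of $R^i(g_X)_!\Q_\ell$ is just a $\Q_\ell$-vector space with no visible Frobenius or weight filtration, so its dimension alone cannot determine $P(g_X)(s)$; indeed, the na\"{\i}ve Betti polynomial is not even additive on $K_0(Var_F)$, so it cannot be the realization in question. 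The paper proceeds differently: it first reduces (by definition of $P_S$) to $S=\Spec F$ with $F$ a field of characteristic $p>0$, then spreads $f$ out to a universal homeomorphism $f':X'\to Y'$ over a finitely generated $\F_p$-algebra $A\subset F$. Since the Poincar\'e polynomials $P(g_{X'})$ and $P(g_{Y'})$ are constructible functions on $\Spec A$, it suffices to compare them at closed points, which have finite residue fields. There the isomorphism from (2) is an isomorphism of Frobenius modules, and the weight polynomial over a finite field is determined by Frobenius eigenvalues, so (2) finishes the argument. Your strategy can be repaired along these lines, but the appeal to ``dimensions of stalks at geometric points'' as written is a genuine gap.
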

\begin{proof}
(1) Since $f$ is a universal homeomorphism and $F$ is perfect, the
map
$$f(F):X(F)\rightarrow Y(F)$$ is a bijection.

(2) By Grothendieck's spectral sequence for the composition of the
functors $f_{!}=f_*$ and $(g_{Y})_!$, it suffices to show that
\begin{eqnarray}
f_{*}\Q_\ell&\cong& \Q_\ell, \label{eq-f0}
\\
 R^jf_{*}(\Q_\ell)&=&0 \mbox{ for } j>0. \label{eq-fj}
 \end{eqnarray} The isomorphism (\ref{eq-f0}) follows immediately
 from the fact that $f$ is a universal homeomorphism. The equality
 (\ref{eq-fj}) follows from finiteness of
 $f$.

(3) This is a special case of point (2).

 (4) By definition of the Poincar\'e polynomial \cite[8.12]{Ni-tracevar}, it is enough to consider the case where $S$ is the spectrum of
 a field $F$ of characteristic $p>0$. By \cite[8.8.2 and 8.10.5]{ega4.3}, there exist a finitely
 generated sub-$\mathbb{F}_p$-algebra $A$ of $F$, and a universal homeomorphism
 $$f':X'\rightarrow Y'$$ of separated $A$-schemes of finite type, such
 that $f:X\rightarrow Y$ is obtained from $f'$ by base change
 from $\Spec A$ to $S=\Spec F$. We denote by $g_{X'}$ and $g_{Y'}$
 the structural morphisms from $X'$ and $Y'$ to $\Spec A$.

 If we denote by $\eta$ the generic
 point of $\Spec A$ and by $\kappa(\eta)$ its residue field, then
 the Poincar\'e polynomial $P(g_X)$, resp. $P(g_Y)$, is equal to the Poincar\'e polynomial of the separated
 $\kappa(\eta)$-scheme of finite type $X'\times_A \kappa(\eta)$,
 resp. $Y'\times_A \kappa(\eta)$ \cite[8.12]{Ni-tracevar}. Since $P(g_{X'})$ and
 $P(g_{Y'})$ are constructible functions on $\Spec A$ \cite[8.12]{Ni-tracevar}, it suffices
 to show that their values coincide at all closed points of $\Spec A$.
 Hence, we may assume that $F$ is a finite field. In this case,
 the result follows immediately from point (2), by definition of the Poincar\'e polynomial of a variety over a finite field \cite[8.1]{Ni-tracevar}.
\end{proof}
\begin{cor}\label{cor-euler}
If $F$ is a field, then the \'etale Euler characteristic
$$\chi_{top}:K_0(Var_F)\rightarrow \Z$$ factors through a ring morphism
$$\chi_{top}:K_0\langle Var_F\rangle \rightarrow \Z$$ which
localizes to a ring morphism
$$\chi_{top}:\mathcal{M}^{mod}_F \rightarrow \Z$$
\end{cor}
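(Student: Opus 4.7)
The plan is to deduce Corollary \ref{cor-euler} directly from part (3) of Proposition \ref{prop-real} by observing that the \'etale Euler characteristic factors through the Galois realization. First I would fix a prime $\ell$ invertible in $F$; this is legitimate because the resulting morphism $\chi_{top}$ is independent of the choice of $\ell$, as recalled in Section \ref{sec-k0}.

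Next I would note that, by the very definition recalled for $Gal$ in Section \ref{sec-k0}, we have the identity
$$\chi_{top}([X])=\sum_{i=0}^{2\cdot\mathrm{dim}(X)}(-1)^i\dim_{\Q_\ell}H^i_c(X\times_F F^s,\Q_\ell)=\dim\bigl(Gal([X])\bigr)$$
for every $F$-variety $X$, where $\dim:K_0(Rep_{G_F}\Q_\ell)\rightarrow \Z$ is the ring morphism sending the class of a continuous $\ell$-adic Galois representation $V$ to its $\Q_\ell$-dimension. Thus $\chi_{top}$ equals the composition $\dim\circ Gal$ on $K_0(Var_F)$. By Proposition \ref{prop-real}(3), $Gal$ factors through $K_0\langle Var_F\rangle$, so $\chi_{top}=\dim\circ Gal$ factors through $K_0\langle Var_F\rangle$ as well, yielding the desired ring morphism
$$\chi_{top}:K_0\langle Var_F\rangle \rightarrow \Z.$$

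To extend to the localization $\mathcal{M}^{mod}_F$, I would invoke the universal property of localization: it is enough to check that the image of $\widetilde{\LL}_F=\langle \A^1_F\rangle $ under the induced morphism is a unit in $\Z$. Since $\chi_{top}(\A^1_F)=1$, this is immediate. There is essentially no obstacle here; the statement is a formal consequence of Proposition \ref{prop-real}(3), the factorization of $\chi_{top}$ through $Gal$, and the universal property of localization.
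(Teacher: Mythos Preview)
Your proposal is correct and follows the same approach as the paper, which states the corollary immediately after Proposition \ref{prop-real} without further proof: the \'etale Euler characteristic factors as $\dim\circ Gal$ (or equivalently through the \'etale realization), so Proposition \ref{prop-real}(3) gives the factorization through $K_0\langle Var_F\rangle$, and $\chi_{top}(\A^1_F)=1$ handles the localization.
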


\section{Motivic integration in mixed
characteristic}\label{sec-mixed} In this section,
 we will fill a gap in the proof of the change of
variables theorem for motivic integrals on formal schemes in mixed
characteristic \cite[8.0.5]{sebag1}. To this aim, it is necessary
to replace the localized Grothendieck ring of varieties by the
modified version introduced in Definition \ref{def-mod}. We
emphasize that this correction only affects the theory of motivic
integration in {\em mixed} characteristic; in equal characteristic
$p\geq 0$, the results in
 \cite[7.1.3 and 8.0.5]{sebag1} are valid as stated.

Throughout this section, we denote by $R$ a complete discrete
valuation ring of mixed characteristic with quotient field $K$ and
perfect residue field $k$. We put $\mathbb{D}=\Spf\,R$. An $stft$
formal $R$-scheme is a separated formal $R$-scheme topologically
of finite type.

\subsection{The change of variables formula}
We consider a morphism of formal $R$-schemes $h:Y\rightarrow X$,
where $X$ and $Y$ are flat $stft$ formal $R$-schemes of pure
relative dimension $d$. The proof of the change of variables
theorem \cite[8.0.5]{sebag1} is based on the following lemma
\cite[7.1.3]{sebag1}. For terminology and notations, we refer to
\cite{sebag1}.
\begin{lem} [\cite{sebag1}, Lemme 7.1.3]\label{lemm-err}
\label{lem a pbs} Assume that the structural morphism
$Y\rightarrow\mathbb{D}$ is smooth. Let $B\subset Gr(Y)$ be an
$m$-cylinder, for some $m\in \N$, and put $A=h(B)$. Assume that
$e$ and $e'$ are elements of $\N$ such that
$\mathrm{ord}_{\pi}(\mathrm{Jac})_h(\varphi)=e$
 for all $\varphi\in B$, and
$A\subset Gr^{(e')}(X)$. Then $A$ is a cylinder.

Moreover, if the restriction of $h$ to $B$ is injective, then, for
all integers $n\geq \mathrm{max}(2e+c_X, m+e)$, the following
properties hold.

\begin{enumerate}

\item If $\varphi$ and $\varphi'$ belong to $B$ and
$\pi_{n,X}(h(\varphi))=\pi_{n,X}(h(\varphi'))$, then
$\pi_{n-e,Y}(\varphi)=\pi_{n-e,Y}(\varphi')$,

\item We have $[\pi_{n,Y}(B)]=[\pi_{n,X}(A)]\LL^e$ in
$K_0(Var_k)[\LL^{-1}]$.
\end{enumerate}
\end{lem}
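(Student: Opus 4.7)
The plan is to follow the original strategy of \cite[Lemme 7.1.3]{sebag1}, reducing everything to questions about the finite truncations $\pi_{n,Y}(B)$ and $\pi_{n,X}(A)$. The smoothness of $Y \to \mathbb{D}$ provides trivializations of the truncation maps $Gr_{n+1}(Y) \to Gr_n(Y)$ as $\A^d_k$-bundles, and the constant Jacobian order $e$ along $B$ will pin down the fibers of $h$ at truncated level.

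I would first show $A$ is a cylinder. Since $B$ is an $m$-cylinder, $\pi_{m,Y}(B)$ is constructible in $Gr_m(Y)$. The hypothesis $A \subset Gr^{(e')}(X)$ confines $A$ to a locus where the truncation maps have predictable fiber dimension, so by Greenberg's theorem the image $\pi_{n,X}(A)$ is constructible for $n$ sufficiently large. The constant Jacobian order $e$ along $B$ then shows that the equivalence relation on $B$ cut out by equality under $\pi_{n,Y}$ is compatible with the equivalence relation on $A$ cut out by $\pi_{n,X}$, which yields $A = \pi_{n,X}^{-1}(\pi_{n,X}(A))$ for $n$ large enough.

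For part (1), I would assume $h|_B$ is injective and fix $n \geq \max(2e + c_X, m+e)$. The argument is a Hensel/Taylor estimate: given $\varphi, \varphi' \in B$ with $h(\varphi) \equiv h(\varphi') \pmod{\pi^n}$, write $\varphi' = \varphi + \pi^j \delta$ with $j$ maximal and expand
\[
h(\varphi') - h(\varphi) = \pi^j \mathrm{Jac}_h(\varphi)\cdot\delta + O(\pi^{2j}).
\]
The constant order $e$ of $\det\mathrm{Jac}_h$ combined with the bound $n \geq 2e + c_X$, which ensures the non-linear correction terms are negligible, then forces $j \geq n-e$, so that $\pi_{n-e,Y}(\varphi) = \pi_{n-e,Y}(\varphi')$.

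Part (2) is the heart of the statement. By part (1) together with the injectivity of $h|_B$, the induced morphism $\pi_{n,Y}(B) \to \pi_{n,X}(A)$ has all $k$-fibers isomorphic to $\A^e_k$, parameterizing the $\pi^{n-e}$-order deformations lying in the kernel of the Jacobian. The plan is then to stratify $\pi_{n,X}(A)$ into finitely many locally closed pieces, trivialize the Jacobian-kernel sub-bundle on each piece, and thereby exhibit the map as a piecewise Zariski-locally trivial $\A^e_k$-bundle; summing gives $[\pi_{n,Y}(B)] = [\pi_{n,X}(A)] \LL^e$ in $K_0(Var_k)$, hence in $K_0(Var_k)[\LL^{-1}]$. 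The main obstacle will be producing this trivialization as an honest piecewise isomorphism of $k$-schemes rather than merely a universal homeomorphism: the local coordinates that diagonalize the Jacobian are naturally defined only after a purely inseparable extension of the coordinate ring of each stratum, and descending to a genuine isomorphism over each stratum --- which is what the stated identity in $K_0(Var_k)[\LL^{-1}]$ really demands --- is the delicate point where the perfectness of $k$ must be exploited.
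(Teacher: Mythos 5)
There is a genuine gap, and it sits exactly at the point you flag at the end of your sketch --- but it cannot be closed the way you hope, and in fact the inability to close it is the very reason this note was written. Your reduction of part (2) to showing that $\pi_{n,Y}(B)\rightarrow\pi_{n,X}(A)$ is a piecewise trivial $\A^e_k$-fibration relies on computing the fibers $Gr_n(Y)\times_{Gr_n(X)}\Spec F$ at points of the strata of $\pi_{n,X}(A)$. In mixed characteristic this computation is only available when $F$ is a \emph{perfect} field containing $k$: only then can one identify $Gr(X)(F)$ with $X(R_F)$, where $R_F=R\widehat{\otimes}_{W(k)}W(F)$, and run the Taylor/Hensel argument to get $Gr_n(Y)\times_{Gr_n(X)}\Spec F\cong\A^e_F$. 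The strata of $\pi_{n,X}(A)$ are positive-dimensional $k$-varieties in characteristic $p$, so their generic points have \emph{imperfect} residue fields; perfectness of $k$ itself is of no help, because the base of the would-be fibration is the stratum, not $\Spec k$. The trivialization you want therefore exists only after passing to the perfect closure of the function field of each stratum, and the standard limit argument (EGA IV, 8.8.2 and 8.10.5) descends it merely to a \emph{universal homeomorphism} over a dense open subset of the stratum, not to an isomorphism. Consequently the fibration lemma of this paper (Lemma \ref{lemm-fibration}) only yields
$$\la \pi_{n,Y}(B)\ra=\la\pi_{n,X}(A)\ra\,\widetilde{\LL}^e_k \quad\text{in } K_0\la Var_k\ra,$$
i.e.\ the corrected statement (Lemma \ref{lemm-corr}), and not the asserted identity $[\pi_{n,Y}(B)]=[\pi_{n,X}(A)]\LL^e$ in $K_0(Var_k)[\LL^{-1}]$.

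Whether the original identity in $K_0(Var_k)[\LL^{-1}]$ is actually true is, as far as this paper knows, open: no counterexample is known, since it is not known whether $K_0(Var_k)\rightarrow K_0\la Var_k\ra$ is injective, but no proof is known either --- descending a purely inseparable trivialization to a genuine piecewise isomorphism would essentially require answering Question \ref{que-uhom}. Your treatment of the cylinder statement and of part (1) (the Hensel/Taylor estimate forcing $j\geq n-e$) follows the original argument of \cite[7.1.3]{sebag1} and is unaffected; the defect is confined to part (2), where your plan, as written, cannot be completed without replacing $K_0(Var_k)[\LL^{-1}]$ by the modified ring.
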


Recall that, for every $n\in \N$ and every $stft$ formal
$R$-scheme $Z$, $Gr_n(Z)$ denotes the level $n$ Greenberg scheme
of $Z$ \cite[\S\,3]{sebag1}. The proof of point (2) in
\cite[7.1.3]{sebag1} only computes the fibers
$$Gr_{n}(Y)\times_{Gr_n(X)}\Spec F$$ when $F$ is a {\em perfect}
field containing $k$ and $\Spec F\rightarrow Gr_n(X)$ a morphism
of $k$-schemes. Indeed, only when $F$ is perfect, we can identify
the set $Gr(X)(F)$ with the set $X(R_F)$, where
$$R_F=R\widehat{\otimes}_{W(k)}W(F)$$ is a complete discrete
valuation ring that is an unramified extension of $R$. Then the
proof of \cite[7.1.3]{sebag1} shows that there exists an
isomorphism of $F$-schemes $$Gr_{n}(Y)\times_{Gr_n(X)}\Spec F\cong
\A^e_F$$ so that we can deduce from Lemma \ref{lemm-fibration}
that $$\la \pi_{n,Y}(B)\ra=\la
\pi_{n,X}(A)\ra\widetilde{\LL}^e_k$$ in $K_0\la Var_k\ra$.
However, we cannot conclude that the equality in point (2) of
Lemma \ref{lemm-err} holds. Therefore, we have to replace Lemma
\ref{lemm-err} by the following statement.

\begin{lem} [Corrected form] \label{lemm-corr}
 Assume that the structural morphism
$Y\rightarrow\mathbb{D}$ is smooth. Let $B\subset Gr(Y)$ be an
$m$-cylinder, for some $m\in \N$, and put $A=h(B)$. Assume that
$e$ and $e'$ are elements of $\N$ such that
$\mathrm{ord}_{\pi}(\mathrm{Jac})_h(\varphi)=e$
 for all $\varphi\in B$, and
$A\subset Gr^{(e')}(X)$. Then $A$ is a cylinder.

Moreover, if the restriction of $h$ to $B$ is injective, then, for
all integers $n\geq \mathrm{max}(2e+c_X, m+e)$, the following
properties hold.

\begin{enumerate}

\item If $\varphi$ and $\varphi'$ belong to $B$ and
$\pi_{n,X}(h(\varphi))=\pi_{n,X}(h(\varphi'))$, then
$\pi_{n-e,Y}(\varphi)=\pi_{n-e,Y}(\varphi')$,

\item We have $\la
\pi_{n,Y}(B)\ra=\la\pi_{n,X}(A)\ra\widetilde{\LL}^e$ in $K_0\la
Var_k\ra$.
\end{enumerate}
\end{lem}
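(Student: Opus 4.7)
The claim that $A$ is a cylinder and property (1) are identical to the corresponding statements in Lemma \ref{lemm-err}; since neither mentions the Grothendieck ring, they can be transferred verbatim from \cite[7.1.3]{sebag1}. I therefore focus on property (2).

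The plan is to apply Lemma \ref{lemm-fibration} with $S=\Spec k$ to the morphism of $k$-schemes of finite type
$$\bar h_n\colon \pi_{n,Y}(B)\longrightarrow \pi_{n,X}(A)$$
induced by $h$, taking $Z=\A^e_k$. This requires verifying the following: for every perfect field $F$ containing $k$ and every $k$-morphism $\Spec F\to \pi_{n,X}(A)$, there exists a universal $F$-homeomorphism
$$\pi_{n,Y}(B)\times_{\pi_{n,X}(A)}\Spec F \longrightarrow \A^e_F.$$

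This is precisely what the fiber calculation in the proof of \cite[7.1.3]{sebag1} provides. When $F$ is perfect, the identification $Gr(X)(F)=X(R_F)$ with $R_F=R\widehat{\otimes}_{W(k)}W(F)$ allows one to lift every $F$-point of $\pi_{n,X}(A)$ to an $F$-point of $Gr(X)$, and the fiber of $\bar h_n$ above this point is then identified with $\A^e_F$ via an explicit affine parametrization built from the Jacobian of $h$. The bound $n\geq \max(2e+c_X,\,m+e)$, the smoothness of $Y\to \mathbb{D}$, and the injectivity of $h|_B$ enter here exactly as in \cite[7.1.3]{sebag1}. The output is an $F$-isomorphism, hence a fortiori a universal $F$-homeomorphism, so the hypothesis of Lemma \ref{lemm-fibration} is met.

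Lemma \ref{lemm-fibration} then yields
$$\la \pi_{n,Y}(B)\ra \,=\, \la\pi_{n,X}(A)\ra \cdot \la\A^e_k\ra \,=\, \la\pi_{n,X}(A)\ra\widetilde{\LL}^e$$
in $K_0\la Var_k\ra$, which is the desired equality. The main obstacle is conceptual rather than technical, and is exactly what forces the passage to the modified Grothendieck ring: the classical fiber calculation is available only over perfect residue fields, so it does not suffice to establish the analogous identity in $K_0(Var_k)[\LL^{-1}]$. Lemma \ref{lemm-fibration} is tailored to this situation---its hypothesis tests fibers only over perfect residue fields---so routing the argument through $K_0\la Var_k\ra$ is precisely the correction that repairs the gap in \cite[7.1.3]{sebag1}.
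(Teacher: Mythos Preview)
Your proposal is correct and matches the paper's own argument essentially verbatim: the paper likewise carries over the cylinder claim and property (1) from \cite[7.1.3]{sebag1} unchanged, observes that the fiber computation there yields an $F$-isomorphism $Gr_n(Y)\times_{Gr_n(X)}\Spec F\cong \A^e_F$ only when $F$ is perfect, and then invokes Lemma~\ref{lemm-fibration} with $Z=\A^e_k$ to obtain $\la\pi_{n,Y}(B)\ra=\la\pi_{n,X}(A)\ra\widetilde{\LL}^e$ in $K_0\la Var_k\ra$. Your identification of the conceptual obstacle---that the perfectness restriction on residue fields is exactly why the equality is only available in the modified Grothendieck ring---is also the point the paper emphasizes.
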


The statements in \cite[8.0.3]{sebag1} and \cite[8.0.5]{sebag1}
(change of variables theorem) have to be modified accordingly,
replacing the completed Grothendieck ring
$\widehat{\mathcal{M}}_k$ by the completed modified Grothendieck
ring $\widehat{\mathcal{M}}^{mod}_k$.

We emphasize that we do not know a counter-example to the original
statement in Lemma \ref{lemm-err}, since we do not know if the
projection morphism
$$K_0(Var_k)\rightarrow K_0\la Var_k\ra$$ is an isomorphism.

\subsection{Integration on rigid varieties and the motivic Serre
invariant}\label{sec-rig} If $R$ has mixed characteristic, the
modification of the change of variables theorem in
\cite[8.0.5]{sebag1} also affects the theory of motivic
integration on rigid $K$-varieties developed in \cite{motrigid}.
Theorem-Definition 4.1.2 in \cite{motrigid} should be replaced by
the following statement ({\em only} in the mixed characteristic
case; in equal characteristic $p\geq 0$, all the results in
\cite{motrigid} are valid as stated). For terminology and
notations, we refer to \cite{motrigid}.

\begin{theorem-definition}
Let $X$ be a smooth separated quasi-compact rigid variety over
$K$, of pure dimension $d$. Let $\omega$ be a differential form in
$\Omega^d_{X} (X)$.

\smallskip

(1) Let $\cX$ be a $stft$ formal $R$-model of $X$. Then the
function $\mathrm{ord}_{\varpi, \cX} (\omega)$ is exponentially
integrable on $Gr (\cX)$ and the image of the motivic integral
$$\int_{Gr (\cX)} \LL^{- \mathrm{ord}_{\varpi, \cX}
(\omega)} d\mu$$ in $\widehat{\mathcal{M}}^{mod}_k$ does not
depend on the model $\cX$. We denote it by $\int_X \omega d\mu$.

\smallskip

(2) Assume moreover that $\omega$ is a gauge form, i.e., that it
generates $\Omega^d_{X}$ at every point of $X$, and assume
 that some open formal subscheme  $\cU$ of $\cX$ is a weak
N{\'e}ron model of $X$. Then the function $\mathrm{ord}_{\varpi,
\cX} (\omega)$ takes only a finite number of values on
$Gr(\mathcal{X})$, and its fibres are stable cylinders. The image
of the motivic integral
$$\int_{Gr (\cX)} \LL^{- \mathrm{ord}_{\varpi, \cX}
(\omega)} d\tilde \mu$$ in $\mathcal{M}_k^{mod}$ does not depend
on the model $\cX$. We denote it by $\int_X \omega d \tilde \mu$.
\end{theorem-definition}

If $R$ has mixed characteristic, the definition of the motivic
Serre invariant \cite[4.5.1 and 4.5.3]{motrigid} has to be
modified accordingly.
\begin{theorem-definition}
Let $X$ be a smooth separated quasi-compact rigid $K$-variety, and
let $\mathscr{U}$ be a weak N\'eron model of $X$. The class
$$\la\mathscr{U}_s\ra\in K_0\la Var_k \ra/(\widetilde{\LL}_k-1)$$
of the special fiber $\mathscr{U}_s$ of $\mathscr{U}$ only depends
on $X$, and not on the choice of weak N\'eron model. We denote it
by $S(X)$, and we call it the motivic Serre invariant of $X$.
\end{theorem-definition}


\subsection{Further corrections to the literature}\label{sec-lit}
The theory of motivic integration on formal schemes and rigid
varieties in mixed characteristic has been applied in several
other articles. All of them can  easily be corrected, replacing
the Grothendieck ring of varieties by the modified Grothendieck
ring of varieties. This is harmless for the applications of the
theory, since all the realization morphisms that are used factor
through the modified Grothendieck ring (see Section
\ref{sec-real}). Let us indicate some of the changes that should
be made ({\em only} in the mixed characteristic case; in equal
characteristic $p\geq 0$, all the results are valid as stated).

\bigskip
 In
\cite[4.18]{NiSe-curves}, the last two lines of the statement
should be replaced by: ``$\ldots\la \pi_n(B)\ra = \la
\pi_n(A)\ra\widetilde{\LL}^e_{X_s}$ in $K_0\la Var_{X_s}\ra$''. In
\cite[4.19]{NiSe-curves}, the ring $\widehat{\mathcal{M}}_{X_s}$
should be replaced by $\widehat{\mathcal{M}}^{mod}_{X_s}$, and in
\cite[4.20(2)]{NiSe-curves}, the ring $\mathcal{M}_{X_s}$ should
be replaced by $\mathcal{M}^{mod}_{X_s}$. Likewise, in
\cite[\S\,6]{NiSe-curves}, the Grothendieck rings have to be
replaced by their modified analogues. In particular, the motivic
Serre invariant of a generically smooth $stft$ formal $R$-scheme
$X_{\infty}$ of pure relative dimension \cite[6.2]{NiSe-curves} is
well-defined in $K_0\la Var_{X_s}\ra/(\widetilde{\LL}_{X_s}-\la
X_s\ra)$.

\bigskip
In  \cite[\S\,3.2]{NiSe}, the various Grothendieck rings
should be replaced by the modified Grothendieck rings. The trace
formula in \cite[5.4]{NiSe} remains valid, because the $\ell$-adic
Euler characteristic factors through the modified Grothendieck
ring (Corollary \ref{cor-euler}).

\bigskip
 In
\cite[\S\,5]{NiSe-weilres}, the various Grothendieck rings should
be replaced by the modified Grothendieck rings.

\bigskip
 In Sections 4 and 5.3 of \cite{ni-trace}, the various Grothendieck rings should
be replaced by the modified Grothendieck rings. The trace formula
in \cite[6.4]{ni-trace} remains valid, because the $\ell$-adic
Euler characteristic factors through the modified Grothendieck
ring (Corollary \ref{cor-euler}).

\bigskip
 In \cite[\S\,5]{Ni-tracevar}, in particular in Theorem 5.4 and Definition 5.5, the motivic Serre
invariant should take its values in $K_0\la Var_k\ra
/(\widetilde{\LL}_k-1)$. The results in Sections 6 and 7 of
\cite{Ni-tracevar} remain valid,  because the Poincar\'e
polynomial and the $\ell$-adic Euler characteristic factor through
the modified Grothendieck ring (Proposition \ref{prop-real}(4) and
Corollary \ref{cor-euler}).


\begin{thebibliography}{10}

\bibitem{ega4.2}
A.~Grothendieck and J.~Dieudonn\'e.
\newblock {El\'ements de {G}\'eom\'etrie {A}lg\'ebrique, IV, Deuxi\`eme
  partie.}
\newblock {\em Publ. Math., Inst. Hautes \'Etud. Sci.}, 24:5--231, 1965.

\bibitem{ega4.3}
A.~Grothendieck and J.~Dieudonn\'e.
\newblock {El\'ements de {G}\'eom\'etrie {A}lg\'ebrique, IV, Troisi\`eme
  partie.}
\newblock {\em Publ. Math., Inst. Hautes \'Etud. Sci.}, 28:5--255, 1966.


\bibitem{Larsen-Lunts}
M.~Larsen and V.~A. Lunts.
\newblock {Motivic measures and stable birational geometry.}
\newblock {\em Mosc. Math. J.}, 3(1):85--95, 2003.

\bibitem{Liu-Sebag}
Q.~Liu and J.~Sebag.
\newblock {The Grothendieck ring of varieties and piecewise isomorphisms}.
\newblock {\em to appear in Math. Z.}

\bibitem{motrigid}
F.~Loeser and J.~Sebag.
\newblock {Motivic integration on smooth rigid varieties and invariants of
  degenerations}.
\newblock {\em Duke Math. J.}, 119:315--344, 2003.

\bibitem{ni-trace}
J.~Nicaise.
\newblock A trace formula for rigid varieties, and motivic Weil generating
  series for formal schemes.
\newblock {\em Math. Ann.}, 343(2):285--349, 2009.

\bibitem{Ni-tracevar}
J.~Nicaise.
\newblock {A trace formula for varieties over a discretely valued field}.
\newblock {\em to appear in J. Reine Angew. Math.}, arxiv:0805.1323.

\bibitem{NiSe-curves}
J.~Nicaise and J.~Sebag.
\newblock{\em Motivic Serre invariants of curves}.
\newblock{Manuscr. Math.} 123(2):105--132, 2007.

\bibitem{NiSe}
J.~Nicaise and J.~Sebag.
\newblock The motivic {S}erre invariant, ramification, and the analytic
  {M}ilnor fiber.
\newblock {\em Invent. Math.}, 168(1):133--173, 2007.

\bibitem{NiSe-weilres}
J.~Nicaise and J.~Sebag.
\newblock Motivic {S}erre invariants and {W}eil restriction.
\newblock {\em J. Algebra}, 319(4):1585--1610, 2008.

\bibitem{sebag1}
J.~Sebag.
\newblock Int\'egration motivique sur les sch\'emas formels.
\newblock {\em Bull. Soc. Math. France}, 132(1):1--54, 2004.

\bibitem{Sebag-PAMS}
J.~Sebag.
\newblock Variations on a question of Larsen and Lunts.
\newblock{\em to appear in Proc. Am. Math. Soc.}
\end{thebibliography}
\end{document}